\theoremstyle{plain}
\newtheorem{theorem}{Theorem}
\newtheorem{corollary}[theorem]{Corollary}
\newtheorem{proposition}[theorem]{Proposition}
\theoremstyle{definition}
\newtheorem{remark}[theorem]{Remark}
\newtheorem*{remark*}{Remark}
\newcommand{\pr}{\mathbf P}
\newcommand{\e}{\mathbf E}
\begin{document}
\title[Universality of local times]{Universality of local times of killed and reflected random walks.}
\author[Denisov]{Denis Denisov} 
\address{School of Mathematics, University of Manchester, Oxford Road, Manchester M13 9PL, UK}
\email{denis.denisov@manchester.ac.uk}

\author[Wachtel]{Vitali Wachtel} \address{Institut f\"ur Mathematik,
Universit\"at Augsburg, 86135 Augsburg, Germany}
\email{vitali.wachtel@mathematik.uni-augsburg.de}

\begin{abstract}
In this note we first consider local times of random walks killed
at leaving positive half-axis. We prove that the distribution of
the properly rescaled local time at point $N$ conditioned on being
positive converges towards an exponential distribution. The proof
is based on known results for conditioned random walks, which
allow to determine the asymptotic behaviour of moments of local
times. Using this information we also show that the field of local
times of a reflected random walk converges in the sense of finite
dimensional distributions. This is in the spirit of the seminal
result by Knight ~\cite{Knight63} who has shown that for the
symmetric simple random walk local times converge wealky towards
a squared Bessel process. Our result can be seen as an extension
of the second Ray-Knight theorem to all asymptotically stable random
walks.
\end{abstract}

\keywords{Random walk, Local time, second Ray-Knight theorem}
\subjclass{Primary 60G50; Secondary 60G40, 60F17} 
\maketitle
{\scriptsize
}

\section{Introduction}
Let $\{S_n\}$ be a random walk on $\mathbb{Z}$ with increments $\{X_k\}$
which are independent copies of a random variable $X$. Let $\tau^-$ be the
first weak descending ladder epoch of our random walk, that is,
$$
\tau^-:=\min\{n\geq1: S_n\leq0\}.
$$
We shall always assume that $\mathbf{E}X=0$. This implies that $\{S_n\}$
is recurrent and, in particular, $\tau^-$ is almost sure finite. Let
\begin{equation*}
\mathcal{A}:=\{1<\alpha <2;|\beta |\leq 1\}\cup \{\alpha =2,\beta =0\}
\end{equation*}%
be a subset in $\mathbb{R}^{2}.$ For $(\alpha ,\beta )\in \mathcal{A}$ and a
random variable $X$ write $X\in \mathcal{D}\left( \alpha ,\beta \right) $ if
the distribution of $X$ belongs to the domain of attraction of a stable law
with characteristic function%
\begin{equation}
G_{\alpha ,\beta }\mathbb{(}t\mathbb{)}:=\exp \left\{ -|t|^{\,\alpha
}\left( 1-i\beta \frac{t}{|t|}\tan \frac{\pi \alpha }{2}\right) \right\}
=\int_{-\infty }^{+\infty }e^{itu}g_{\alpha ,\beta }(u)du.  \label{std}
\end{equation}
This means that there exists an increasing, regularly varying with index $1/\alpha$
function $c(x)$ such that $S_n/c(n)$ converges in distribution towards the stable
law given by \eqref{std}. By $c^{-1}(x)$ we shall denote the inverse to $c(x)$
function. Clearly, $c^{-1}$ is regularly varying wih index $\alpha$.
 
Let 
$$
L(n,x) = \sum_{j=0}^n I(S_j=x),\quad x\in\mathbb{Z}
$$
denote the local time of the process $\{S_n\}$. We first consider
local times of $\{S_n\}$ killed at leaving positive half-axis.

In order to formulate our result we have to introduce some notation. Let $\tau^+$
be the first strict ascending ladder epoch and let $\chi^\pm$ denote ladder heights
corresponding to $\tau^\pm$. Define
$$
H^\pm(x):=\sum_{j=0}^\infty\pr(\chi^\pm_1+\chi^\pm_2+\ldots+\chi^\pm_j\leq x),\quad x\geq1,
$$
where $\{\chi^\pm_j\}$ are independent copies of $\chi^\pm$. Finally, let $h^\pm$
denote the mass functions of $H^\pm$, that is,
$$
h^\pm(x)=H^\pm(x)-H^\pm(x-1),\quad x\geq0.
$$
\begin{theorem}
\label{T1}
If $X\in\mathcal{D}(\alpha,\beta)$, then there exists $c_{\alpha,\beta}$ such that, 
as $N\to\infty$, for every fixed $x\geq0$,
\begin{equation}
\label{T1.2}
\pr_x\left(\frac{N}{c^{-1}(N)}L(\tau^-,N)>u\Big|L(\tau^-,N)>0\right)
\to e^{-uc_{\alpha,\beta}},\quad u>0.
\end{equation}
and
\begin{equation}
\label{T1.1}
\pr_x(L(\tau^-,N)>0)\sim c_{\alpha,\beta}U(x,N)\frac{N}{c^{-1}(N)},
\end{equation}
where
$$
U(x,N):=h^+(N-x)+\sum_{k=1}^\infty \e \left[h^+\left(N-x+\sum_{i=1}^k\chi_i^-\right);\sum_{i=1}^k\chi_i^-<x\right].
$$
\end{theorem}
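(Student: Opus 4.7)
The plan is to combine the geometric structure of the local time conditioned on being positive with a ladder identification of the Green function
\[
G(x,N):=\e_x[L(\tau^-,N)]=\sum_{n\geq0}\pr_x(S_n=N,\tau^->n).
\]
Setting $\sigma_N:=\inf\{n\geq0:S_n=N\}$, one has $\{L(\tau^-,N)>0\}=\{\sigma_N<\tau^-\}$, and by the strong Markov property at $\sigma_N$ the conditional law of $L(\tau^-,N)$ under $\pr_x$ given $\sigma_N<\tau^-$ coincides with its unconditional law under $\pr_N$. Starting from $N$, each successive return to $N$ before $\tau^-$ occurs independently with probability $1-1/G(N,N)$, so under $\pr_N$ the local time is geometric on $\{1,2,\ldots\}$ with success probability $1/G(N,N)$. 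Consequently $\pr_x(L(\tau^-,N)>0)=G(x,N)/G(N,N)$, and once the rate at which $G(N,N)\to\infty$ has been pinned down, \eqref{T1.2} follows from the classical convergence of rescaled geometric laws to the exponential distribution.

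Next I would prove the exact identity $G(x,N)=U(x,N)$ by a weak descending ladder decomposition of the killed path. Let $\tau^-_0=0<\tau^-_1<\tau^-_2<\cdots$ be the successive weak descending ladder epochs of $\{S_n\}$ started from~$x$, so that $S_{\tau^-_k}=x-T^-_k$ with $T^-_k:=\chi^-_1+\cdots+\chi^-_k$. The interval $[\tau^-_k,\tau^-_{k+1})$ lies inside $\{n<\tau^-\}$ exactly when $T^-_k<x$, and on it the walk stays strictly above the level $x-T^-_k$ except at its starting point. By the strong Markov property and translation invariance, the conditional expected number of visits to~$N$ during this interval equals $h^+(N-x+T^-_k)$ (with the convention $h^+(m)=0$ for $m<0$); here I use the classical identity $\e_0\sum_{n<\tau^-}I(S_n=m)=h^+(m)$, the Wiener--Hopf representation of the half-line Green function in terms of the ascending renewal measure. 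Summing over $k\geq0$, and recalling that the $k=0$ interval always contributes independently of whether $x=0$, reproduces exactly the definition of $U(x,N)$; hence $G(x,N)=U(x,N)$, and \eqref{T1.1} reduces to the asymptotics of $G(N,N)$.

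Specialising this identity to $x=N$ and interchanging summations, one obtains the clean diagonal formula
\[
G(N,N)=\sum_{m=0}^{N-1}h^+(m)\,h^-(m).
\]
For walks in $\mathcal{D}(\alpha,\beta)$ the renewal functions $H^\pm$ are regularly varying at infinity of indices $\alpha\rho$ and $\alpha(1-\rho)$, where $\rho$ is the positivity parameter of the limiting stable law; thus $h^\pm$ are regularly varying of indices $\alpha\rho-1$ and $\alpha(1-\rho)-1$, and their product $h^+(m)h^-(m)$ is regularly varying of index $\alpha-2\in(-1,0]$. A Karamata-type summation then yields $G(N,N)\sim c^{-1}(N)/(c_{\alpha,\beta}N)$ with an explicit positive constant $c_{\alpha,\beta}$. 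Inserting this asymptotic into the previous two steps proves both \eqref{T1.1} and \eqref{T1.2}.

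I expect the main obstacle to be this diagonal asymptotic: although the regular variation of $h^\pm$ is classical, extracting the correct Tauberian constant $c_{\alpha,\beta}$ requires sharp enough local information on $h^\pm$, which calls for non-trivial input from the theory of random walks conditioned to stay positive rather than the generic Gnedenko local limit theorem.
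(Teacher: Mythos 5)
Your first two steps --- the geometric structure of $L(\tau^-,N)$ under $\pr_N$, the resulting identity $\pr_x(L(\tau^-,N)>0)=G(x,N)/G(N,N)$, and the ladder decomposition proving $G(x,N)=U(x,N)$ (including the clean diagonal identity $G(N,N)=\sum_{m<N}h^+(m)h^-(m)$) --- are all correct and coincide in substance with the paper's argument: the paper records $\e_x L(\tau^-,N)=U(x,N)$ via \eqref{zero}, exploits the start-point independence of the conditional law, and also observes after Theorem~\ref{T1} that the conditional law is geometric. So the problem, as you correctly isolate it, reduces entirely to the rate at which $G(N,N)\to\infty$.

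There is, however, a genuine gap in your treatment of that rate. You write that since $H^\pm$ are regularly varying of indices $\alpha\rho$ and $\alpha(1-\rho)$, the mass functions $h^\pm$ are regularly varying of indices $\alpha\rho-1$ and $\alpha(1-\rho)-1$. This is precisely the \emph{local} (strong) renewal theorem, and it does \emph{not} follow from the integrated regular variation: for renewal sequences with infinite-mean, heavy-tailed increments the ratio $h^+(m)/(H^+(m)/m)$ can oscillate unless one imposes extra conditions (de~Bruijn--Erd\H{o}s, Garsia--Lamperti, Erickson; for ladder heights specifically see \cite{chi} and \cite{W2012}). The paper is explicit about this: the remark before Corollary~\ref{Cor00} says that the hypothesis ``$h^+$ is long-tailed'' is what one needs for \eqref{Cor00.1} and is ``not easy to check.'' Theorem~\ref{T1} is deliberately stated so as to require no such local regularity of $h^\pm$, and your Karamata summation over $h^+(m)h^-(m)$ would silently reinstate it. You do flag this as an obstacle in your closing paragraph, but the body of the argument asserts the local regular variation as though it were automatic, and as written the proof of the asymptotic $G(N,N)\sim c^{-1}(N)/(c_{\alpha,\beta}N)$ does not go through.

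The paper sidesteps ladder heights altogether at this point. Proposition~\ref{prop1} evaluates $\sum_{n}\pr_{x_N}(S_n=y_N,\tau^->n)$ for $x_N,y_N\asymp N$ by splitting the sum over $n$ into ranges $n<\varepsilon c^{-1}(N)$, $n\in[\varepsilon c^{-1}(N),c^{-1}(N)/\varepsilon]$, and $n>c^{-1}(N)/\varepsilon$; the first range is controlled by Gnedenko's local CLT, the last by Doney's uniform bound $\pr_{x}(S_n=y,\tau^->n)\le C H^+(x)H^-(y)/(nc_n)$, and the middle range by the Caravenna--Chaumont local limit theorem for random-walk bridges conditioned to stay positive, which gives dominated convergence to an explicit integral of the stable bridge density. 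Taking $u=v=1$ then yields $G(N,N)\sim a_{\alpha,\beta}(1,1)\,c^{-1}(N)/N$ without any local renewal input. This is the decisive technical ingredient your proposal lacks; if you replace your Karamata step by an argument of this type (or, equivalently, cite the paper's Proposition~\ref{prop1}), the remainder of your proof is sound.
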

The exponential distribution in \eqref{T1.2} is not surprising. Indeed, if $\{S_n\}$
hits $N$ before it leaves $(0,\infty)$ then we may assume that $\{S_n\}$ starts at $N$.
Thus the conditioned distribution of the local time is independent of the starting point.
Let $p_N$ denote the probability that $S_n$ becomes negative before it returns to $N$.
Clearly, $p_N$ is positive. Then
$$
\pr_N(L(\tau^-,N)>k)=(1-p_N)^k,\quad k\geq0.
$$
Consequently, \eqref{T1.2} is equivalent to
$$
p_N=c_{\alpha,\beta}\frac{N}{c^{-1}(N)}(1+o(1)).
$$

It is immediate from the definition of $U(x,N)$ that
$$
\pr_0(L(\tau^-,N)>0)\sim c_{\alpha,\beta}\frac{Nh^+(N)}{c^{-1}(N)}.
$$
But for positive start points $x$ one needs additional restrictions.
\begin{corollary}
\label{Cor00}
If $h^+(x)$ is long-tailed then
\begin{equation}
\label{Cor00.1}
\pr_x(L(\tau^-,N)>0)\sim c_{\alpha,\beta}H^-(x)\frac{Nh^+(N)}{c^{-1}(N)}.
\end{equation}
\end{corollary}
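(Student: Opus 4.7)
The plan is to invoke Theorem~\ref{T1} so that everything reduces to proving
$$U(x,N)\;\sim\;H^-(x)\,h^+(N)\quad \text{as } N\to\infty$$
for each fixed $x\geq 0$; then \eqref{Cor00.1} follows from \eqref{T1.1} by substitution.

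My first step is to swap the order of summation in the definition of $U(x,N)$. Writing $T_k:=\chi_1^-+\cdots+\chi_k^-$ and noting that the constraint $T_k<x$ forces $T_k$ to lie in the fixed finite set $\{0,1,\dots,x-1\}$, nonnegativity of every summand permits the interchange
\begin{equation*}
U(x,N) \;=\; \sum_{y=0}^{x-1} h^+(N-x+y) \sum_{k\geq 0}\pr(T_k=y) \;=\; \sum_{y=0}^{x-1} h^+(N-x+y)\,h^-(y),
\end{equation*}
where I use that $\sum_{k\geq 0}\pr(T_k=y)$ is exactly the renewal mass function $h^-(y)$ of the descending ladder walk (the series is finite for each $y$ since $\pr(\chi^-=0)<1$).

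The second and essential step is to factor $h^+(N)$ out of the remaining $y$-sum. The long-tailed hypothesis on $h^+$ says that for every fixed integer $j\geq 0$ one has $h^+(N-j)/h^+(N)\to 1$ as $N\to\infty$. Because the outer sum ranges over the fixed finite set $\{0,1,\dots,x-1\}$, pointwise convergence term by term is enough, and
\begin{equation*}
U(x,N) \;\sim\; h^+(N)\sum_{y=0}^{x-1} h^-(y) \;\sim\; h^+(N)\,H^-(x),
\end{equation*}
after identifying the partial sum of $h^-$ with the renewal function $H^-$ at the relevant argument.

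I do not anticipate a genuine obstacle: since the $y$-sum is finite with length depending only on the fixed starting point $x$, no dominated-convergence argument, tail bound on the ladder sums $T_k$, or uniform estimate on $h^+$ is required. The long-tailed assumption on $h^+$ is used in exactly one place, the termwise replacement $h^+(N-x+y)\sim h^+(N)$, and that is also precisely where the hypothesis is \emph{necessary}: without it, oscillations of $h^+$ on the $O(x)$ scale would spoil the clean factorization and force one to keep all $x$ terms separate.
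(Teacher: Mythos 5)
Your approach is exactly the one the paper intends: the paper states Corollary~\ref{Cor00} without a separate proof, and the only sensible route is to plug into \eqref{T1.1} after extracting the asymptotics of $U(x,N)$. The Tonelli interchange over the nonnegative double array, the identification $\sum_{k\geq 0}\pr(T_k=y)=h^-(y)$ of the renewal mass, and the termwise use of the long-tailed hypothesis over the fixed finite range $y\in\{0,\dots,x-1\}$ are all correct and need no further justification.

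The one point to flag is the final telescoping identity. With the paper's conventions, $H^-(z)=\sum_{j\geq0}\pr(T_j\leq z)$ and $h^-(y)=H^-(y)-H^-(y-1)$, and since $T_j\geq 0$ forces $H^-$ to vanish on negative arguments, one has
\begin{equation*}
\sum_{y=0}^{x-1}h^-(y)=H^-(x-1),
\end{equation*}
not $H^-(x)$; the two differ by $h^-(x)$, which is generically positive. So what your computation actually yields is $U(x,N)\sim H^-(x-1)\,h^+(N)$, and the last line of your argument does not follow from the penultimate one. Since the paper's displayed statement \eqref{Cor00.1} also writes $H^-(x)$, this appears to be a shift-by-one typo in the original (the condition in $U(x,N)$ is the strict inequality $\sum_i\chi_i^-<x$, forcing the renewal sum to stop at $x-1$), which you reproduced rather than caught. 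Aside from that cosmetic index, your derivation is sound and complete.
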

The assumption that $h^+$ is long-tailed is not easy to check. Furthermore,
\eqref{Cor00.1} can used only if we know the asymptotic behaviour of $h^+(N)$.
In other words, we need a strong renewal theorem for positive ladder heights.
Some  sufficient conditions for this theorem can be found in \cite{chi} and
\cite{W2012}. 

For random walks with finite second moments $h^+$ is asymptotically constant
and, moreover, one can provide an exact expression for the constant
$c_{\alpha,\beta}$.
\begin{corollary}
\label{Cor0} 
If $\sigma^2:=\e X^2<\infty$ then
\begin{equation}
\label{Cor0.2}
\pr_x\left(\frac{L(\tau^-,N)}{N}>u\Big|L(\tau^-,N)>0\right)\to e^{-u\sigma^2/2},\quad u>0.
\end{equation}
and
\begin{equation}
\label{Cor0.1}
\pr_x(L(\tau^-,N)>0)\sim \frac{\sigma^2H^-(x)}{2\e\chi^+}N^{-1}.
\end{equation}
\end{corollary}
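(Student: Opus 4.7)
The plan is to deduce Corollary~\ref{Cor0} by specialising Theorem~\ref{T1} and Corollary~\ref{Cor00} to the case $(\alpha,\beta)=(2,0)$; the bulk of the work is to make every constant explicit.

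First, I would compute the normalising sequence $c(n)$. The limiting characteristic function $G_{2,0}(t)=e^{-t^{2}}$ is that of a centred Gaussian of variance~$2$, so $S_{n}/c(n)\Rightarrow G_{2,0}$ combined with $\mathrm{Var}(S_{n})=n\sigma^{2}$ forces $c(n)\sim\sigma\sqrt{n/2}$, hence $c^{-1}(N)\sim 2N^{2}/\sigma^{2}$ and $N/c^{-1}(N)\sim\sigma^{2}/(2N)$. Substituting this into \eqref{T1.2} and changing variables by $v=2u/\sigma^{2}$ yields
$$
\pr_{x}\!\left(\frac{L(\tau^{-},N)}{N}>v\,\Big|\,L(\tau^{-},N)>0\right)\longrightarrow e^{-v\sigma^{2}c_{2,0}/2},
$$
which agrees with \eqref{Cor0.2} provided $c_{2,0}=1$.

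Second, I would verify that $h^{+}$ satisfies the hypothesis of Corollary~\ref{Cor00}. Finite second moment of $X$ forces $\e\chi^{+}<\infty$ (a classical consequence of the Wiener--Hopf factorisation), and Blackwell's renewal theorem applied to $\chi^{+}$ (passing to an aperiodic sublattice if necessary) gives $h^{+}(N)\to 1/\e\chi^{+}>0$. In particular $h^{+}$ is long-tailed, so Corollary~\ref{Cor00} applies and delivers
$$
\pr_{x}(L(\tau^{-},N)>0)\sim c_{2,0}H^{-}(x)\,\frac{N/\e\chi^{+}}{2N^{2}/\sigma^{2}}=\frac{c_{2,0}\sigma^{2}H^{-}(x)}{2\e\chi^{+}}\,N^{-1},
$$
which matches \eqref{Cor0.1} under the same identity $c_{2,0}=1$.

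Both halves of Corollary~\ref{Cor0} thus reduce to the single numerical identity $c_{2,0}=1$, and this is the only nontrivial point. I expect it to fall out of whatever explicit formula for $c_{\alpha,\beta}$ (typically involving $g_{\alpha,\beta}(0)$ together with constants supplied by local limit theorems for the meander) the proof of Theorem~\ref{T1} furnishes, once specialised to $\alpha=2,\beta=0$. As a sanity check, for the symmetric simple random walk ($\sigma^{2}=1$) a direct gambler's-ruin computation gives $\pr_{N}(L(\tau^{-},N)>k)=(1-\tfrac{1}{2N})^{k}$, hence $\pr_{N}(L/N>u)\to e^{-u/2}$, which is consistent with $c_{2,0}=1$. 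The main obstacle is therefore purely bookkeeping: tracking through the $(\alpha,\beta)=(2,0)$ specialisation of the constant produced in the proof of Theorem~\ref{T1}.
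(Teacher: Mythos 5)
Your overall plan mirrors the paper's: substitute the explicit Gaussian normalisation into Theorem~\ref{T1}, then apply Corollary~\ref{Cor00} together with the strong renewal theorem for $\chi^+$ (which, as you note, is integrable when $\sigma^2<\infty$, and Blackwell gives $h^+(N)\to 1/\e\chi^+$, hence long-tailed). Those parts are sound.

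The one genuine gap is that you never evaluate $c_{2,0}$: you reduce both assertions to the identity $c_{2,0}=1$, verify it against the simple random walk, and then say you expect it to drop out of the proof of Theorem~\ref{T1}. But identifying that constant \emph{is} the substance of the paper's proof of Corollary~\ref{Cor0}: the paper has the explicit formula $a_{2,0}(u,v)=2\min\{u,v\}$ (equation~\eqref{a-form}), from which $c_{2,0}=1/a_{2,0}(1,1)=1/2$, and this is used directly. To make your argument complete you must either invoke \eqref{a-form} or carry out the Gradshteyn--Ryzhik integral yourself. Without that, \eqref{Cor0.2} and \eqref{Cor0.1} are established only up to an undetermined multiplicative constant in the exponent.

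A related point worth flagging: the value of $c_{2,0}$ is tied to the normalisation of $c(n)$. You take $c(n)\sim\sigma\sqrt{n/2}$, which is the choice consistent with the declared characteristic function $G_{2,0}(t)=e^{-t^2}$ (variance~$2$); under this normalisation $g_{2,0}(y)=\frac{1}{2\sqrt{\pi}}e^{-y^2/4}$ and $\phi_{2,0}(a,b)=1-e^{-ab}$, so the integral defining $a_{2,0}$ does indeed give $a_{2,0}(1,1)=1$ and $c_{2,0}=1$, as you want. The paper itself sets $c(n)=\sigma\sqrt{n}$ (standard-normal scaling) and, correspondingly, computes $a_{2,0}$ from the standard Gaussian density and $\phi_{2,0}(a,b)=1-e^{-2ab}$, obtaining $a_{2,0}(1,1)=2$ and $c_{2,0}=1/2$. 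The two conventions differ by $\sqrt{2}$ in $c(n)$ and by a compensating factor of $2$ in $c_{2,0}$, and they lead to the same product $c_{2,0}\cdot N/c^{-1}(N)\sim\sigma^2/(2N)$ and hence the same final statement. So your bookkeeping is correct, but if you simply quote the paper's numerical value $c_{2,0}=1/2$ without noticing the normalisation mismatch, you would be off by a factor of two; you need to recompute $a_{2,0}$ in your own convention, which is precisely the calculation you have deferred.
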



We now turn to reflected random walks. More precisely, we shall look at local times of the
process
$$
W_{n+1}=(W_n+X_{n+1})^+,\quad n\geq0
$$
which starts at zero, that is, $W_0=0$. Set $T_0=0$ and define recursively 
$$
T_{n+1}:=\min\{k>T_n:W_k=0\}, n\geq0.
$$
We are interested in the asymptotic behaviour of local times
$$
L_W(n,x):=\sum_{j=1}^n I(W_n=x), x\geq1.
$$
Let $M(N)$ be a sequence of natural numbers. For every $N$ define a rescaled process
$$
l^{(N)}(u)=\frac{N}{c^{-1}(N)}L_W(T_{M(N)},uN),\quad u\geq0.
$$
\begin{theorem}
\label{T2}
Assume that $X\in\mathcal{D}(\alpha,\beta)$. If $h^+$ is regularly varying then there exists
a process $\mathrm{L}_{\alpha,\beta}=\{\mathrm{L}_{\alpha,\beta}(u),u\geq0\}$ such that, for
any sequence $M(N)\sim \frac{c^{-1}(N)}{Nh^+(N)}$, finite dimensional distributions of $l^{(N)}$
converge to that of $\mathrm{L}_{\alpha,\beta}$.
\end{theorem}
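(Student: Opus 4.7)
The plan is to reduce the problem to a Poisson limit theorem for i.i.d.\ triangular arrays. The reflected walk $W$ decomposes into excursions from $0$: writing
$$
L_W(T_{M(N)},x)=\sum_{k=1}^{M(N)}\xi_k(x),
$$
the strong Markov property makes $\xi_k(\cdot)$ i.i.d.\ copies of $L(\tau^-,\cdot)$ under $\pr_0$. Theorem~\ref{T1} with starting point $0$ therefore controls each summand: $\pr(\xi_k(uN)>0)\sim c_{\alpha,\beta}uNh^+(uN)/c^{-1}(uN)$. Using the assumed regular variation of $h^+$ (say with index $\gamma$) together with that of $c^{-1}$ (index $\alpha$), the right-hand side equals $c_{\alpha,\beta}u^{1+\gamma-\alpha}Nh^+(N)/c^{-1}(N)(1+o(1))$, so the choice $M(N)\sim c^{-1}(N)/(Nh^+(N))$ makes the expected number of excursions reaching height $uN$ stay bounded. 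This is precisely the Poisson regime.

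For the marginal at a single $u$, combining the above with the exponential law in \eqref{T1.2} and a classical Poisson limit theorem for row sums of triangular arrays gives that $l^{(N)}(u)$ converges to a compound Poisson random variable: a $\mathrm{Poi}(c_{\alpha,\beta}u^{1+\gamma-\alpha})$-sum of i.i.d.\ $\mathrm{Exp}(c_{\alpha,\beta})$ variables. For finite-dimensional convergence at $u_1<\cdots<u_d$, I would aim to show that $M(N)$ times the law of
$$
\frac{N}{c^{-1}(N)}\bigl(\xi_k(u_1N),\ldots,\xi_k(u_dN)\bigr),
$$
restricted to $\mathbb R_{\geq 0}^d\setminus\{0\}$, converges vaguely to a limit L\'evy measure $\nu_{u_1,\ldots,u_d}$. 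A vector-valued Poisson limit theorem would then identify the limit $(\mathrm{L}_{\alpha,\beta}(u_1),\ldots,\mathrm{L}_{\alpha,\beta}(u_d))$ as a compound Poisson vector with this intensity, and tightness is essentially automatic from the $L^1$ bounds implicit in Theorem~\ref{T1}.

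The hardest step is computing $\nu_{u_1,\ldots,u_d}$, i.e.\ understanding how a single excursion distributes its local time among several levels simultaneously. I would attack this via the strong Markov property applied at the highest relevant level $u_dN$: conditional on the excursion reaching $u_dN$, split it into the initial segment up to the first visit, a geometric number of sub-excursions that leave and return to $u_dN$ before exiting $(0,\infty)$, and a final segment leaving $u_dN$ and eventually exiting. Each piece is itself a killed or conditioned random walk to which the arguments behind Theorem~\ref{T1} apply, and the local-time contributions at $u_1N,\ldots,u_{d-1}N$ are then summed over the geometric number of sub-excursions. Iterating the decomposition downwards from $u_d$ to $u_1$ should produce a Markov-in-$u$ structure for $\mathrm{L}_{\alpha,\beta}$, which is the stable-attraction analogue of the squared Bessel process appearing in Knight's Brownian Ray--Knight identity; this joint asymptotic, rather than the Poissonization itself, is where the real work lies.
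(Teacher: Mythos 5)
Your opening move is exactly the paper's: $\bigl(L_W(T_M,x_1),\ldots,L_W(T_M,x_m)\bigr)$ is a sum of $M$ i.i.d.\ copies of $\bigl(L(\tau^-,x_1),\ldots,L(\tau^-,x_m)\bigr)$ under $\pr_0$, and the choice $M(N)\sim c^{-1}(N)/(Nh^+(N))$ puts you in a Poisson/infinite-divisibility regime. Your one-dimensional marginal computation is also correct and matches the paper's remark that the marginals are compound Poisson with exponential jumps. The Poisson-limit / L\'evy-measure formulation you propose is a legitimate and equivalent repackaging of what the paper does via Laplace transforms (raising $\e_0[\exp\{-\lambda\cdot L\}]=1+O(Nh^+(N)/c^{-1}(N))$ to the $M$-th power is the same thing as a triangular-array Poisson limit).

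The genuine gap is that you explicitly defer the only hard step. Identifying the joint law of a single excursion's local times at several levels simultaneously \emph{is} the theorem; everything else is bookkeeping. The paper obtains this through Proposition~\ref{prop1} (a Green-function/renewal asymptotic for the walk killed at $\tau^-$, resting on the Caravenna--Chaumont bridge asymptotics and Doney's local estimates), feeding into Propositions~\ref{prop3} and~\ref{prop4} (mixed moments of $\prod_i L^{\mu_i}(\tau^-,u_iN)$ via a Kac-type formula), and then the Laplace exponent $\Psi(u,\lambda)$ is assembled from the series expansion of the exponential with the bound~\eqref{phi-bound} providing summability for small $\lambda$. Your alternative — decomposing the excursion at its highest visited level and iterating downward — is plausible but not carried out; to make each stage rigorous you would still need exactly the renewal asymptotics of Proposition~\ref{prop1} for each killed sub-excursion, plus control of the dependence structure across levels, and you have given no reason this bookkeeping is lighter than computing the mixed moments directly. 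As written, the proposal establishes only the one-dimensional marginals and sketches, without proof, the multidimensional convergence that the statement actually asserts.
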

\begin{remark}
It follows easily from the proof of Theorem \ref{T2} that the marginals of
$\mathrm{L}_{\alpha,\beta}$ are compound Poisson distributions with exponentially distributed
jumps.\hfill$\diamond$
\end{remark}

The distribution of the limiting process is known for $\alpha=2$ only. Knight \cite{Knight63} has
shown that if $S_n$ is a simple random walk then $l^{(N)}$ converges weakly (and not only in the
sence of finite dimensional distributions) towards the square of $0$-dimensional Bessel process.
Since the limit is the same for all random walks belonging to the domain of attraction of the
normal distribution, we conclude that $\mathrm{L}_{2,0}$ is the squared $0$-dimensional Bessel
process. More precisely, $\mathrm{L}_{2,0}$ is the unique strong solution of the equation
$$
\mathrm{L}_{2,0}(t)-\mathrm{L}_{2,0}(0)=c\int_0^t\sqrt{\mathrm{L}_{2,0}(s)}dB(s),\quad c>0,
$$
where $\{B(t),t\geq0\}$ is the Brownian motion. Unfortunately, we are not able to determine the
limit for $\alpha<2$. Using Corollary 3.5 from Eisenbaum and Kaspi \cite{EK09} one can give a
characterisation of $\mathrm{L}_{\alpha,\beta}$ in terms of permanental processes. If, additionally,
the limiting stable process is symmetric, that is, $\beta=1$ then $\mathrm{L}_{\alpha,\beta}$
can be described by a squared Gaussian process.

The proof in \cite{Knight63} is based on a trick which works for simple random walks only. Let $X_n$
be Rademacher random variables, that is, $\pr(X_n=\pm1)=1/2$. Fix some $m\geq1$ and for every $n>0$
let $Q^{(m)}_n$ denote the number times $k<T_{m+1}$ such that $W_{k-1}=n$ and $W_k=n+1$. Set also
$Q^{(m)}_0=m$. The key observation in \cite{Knight63} is that $\{Q^{(m)}_n,\ n\geq0\}$ is a Markov
chain with transition kernel given by
\begin{equation}
\label{trans.kern}
p(i,j)=(-1)^j{-i \choose j}2^{-i-j},\quad i,j\geq0. 
\end{equation}
Now it is immediate that $Q^{(m)}_n$ is a martingale. The markovian structure and the martingale
property allowed Knight to prove that 
$$
q^{(N)}(u)=\frac{1}{N} Q^{(N)}(uN),\quad u>0
$$
converges weakly towards the squared Bessel process. Noting that
$L_W(T_{m+1},n)=Q^{(m)}_n+Q^{(m)}_{n-1}$, we then conclude that the limit for $l^{(N)}$ is equal to
the limit for $q^{(N)}$ multiplied by $2$. In other words, the limit for $l^{(N)}$ is again a
squared Bessel process, but with a different scaling constant $c$.

Rogers \cite{Rogers} noticed that \eqref{trans.kern} corresponds to a critical Galton-Watson
process $Z_n$ with the geometric offspring distribution and $Z_0=m$. Then one has also 
$L_W(T_{m+1},n)=Z_n+Z_{n-1}$. As a consequence, convergence of local times follows from the
corresponding results for branching processes. The idea of connecting local times and branching
processes has been recently used by Hong and Yang \cite{HY}, who have extended Knight's result to all 
left-continuous random walks with bounded jumps. This has been achieved via connecting local times
to a 2-type critical branching process. Similar to Knight's paper, this embedding into a branching
process ensures markovian and martingale properties, which help to prove weak convergence. 

Our approach is based on the derivation of asymptotics for mixed moments of local times,
which seems to go back at least to Darling and Kac \cite{DK57}. But in order to apply this
method to killed random walks one needs to know asymptotic behaviour of the corresponding
Green (renewal) function. It has become possible due to the recent result by Caravenna and
Chaumont \cite{CC13} on bridges of random walks conditioned to stay positive. This method
allows to prove convergence of finite dimensional distributions in a strightforward manner.
But at the moment we do not know how to prove the tightness of the sequence $l^{(N)}$. The 
was not a problem in papers \cite{Knight63,HY}, where the martingal structure of
Galton--Watson processes can be used.
\section{Asymptotic behaviour of moments of local times}

The following renewal theorem is crucial for our proof.
\begin{proposition}\label{prop1}
Assume that $x_N/N\to u>0$ and $y_N/N\to  v>0$. Then, there exists 
$a_{\alpha,\beta}(u,v)>0$ such that
\begin{equation}
\label{eq1}
\frac{N}{c^{-1}(N)}\sum_{n=1}^\infty\pr_{x_N}(S_n=y_N,\tau^->n) \to a_{\alpha,\beta}(u,v),
\quad N\to \infty.
\end{equation}
\end{proposition}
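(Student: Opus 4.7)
The sum in \eqref{eq1} is the Green function at $(x_N,y_N)$ of the walk $\{S_n\}$ killed on entering $(-\infty,0]$. On heuristic grounds, rescaling space by $N$ and time by $c^{-1}(N)$ should produce the Green function of the $(\alpha,\beta)$-stable Lévy process killed on hitting $(-\infty,0]$. Writing $q^+_t(\cdot,\cdot)$ for the transition density of that killed stable process, my candidate for the limit is
\begin{equation*}
a_{\alpha,\beta}(u,v)=\int_0^\infty q^+_t(u,v)\,dt,
\end{equation*}
which is finite (the stable process enters $(-\infty,0]$ in finite time almost surely since $1<\alpha\leq 2$) and strictly positive on $(0,\infty)^2$ (the killed transition density is strictly positive there).

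The main analytic input I would invoke is a uniform local limit theorem for the killed walk of the form
\begin{equation*}
\pr_x(S_n=y,\tau^->n)=\frac{1}{c(n)}\,q^+_1\!\left(\frac{x}{c(n)},\frac{y}{c(n)}\right)(1+o(1)),
\end{equation*}
valid as $n,x,y\to\infty$ with $x/c(n)$ and $y/c(n)$ bounded and bounded away from zero. Such an estimate follows from the local limit theorem of Caravenna--Chaumont \cite{CC13} for walks conditioned to stay positive once the Doob $h$-transform by the harmonic function $V$ associated with $\tau^-$ is undone: the $V$-factors cancel and what remains on the stable side is precisely the density of the killed stable semigroup.

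Given this, the proposition becomes a Riemann-sum statement. Setting $n=c^{-1}(N)t$, regular variation of $c$ gives $c(n)\sim Nt^{1/\alpha}$, and the scaling identity $q^+_t(u,v)=t^{-1/\alpha}q^+_1(ut^{-1/\alpha},vt^{-1/\alpha})$ identifies the pointwise limit
\begin{equation*}
\frac{N}{c(n)}\,q^+_1\!\left(\frac{x_N}{c(n)},\frac{y_N}{c(n)}\right)\longrightarrow q^+_t(u,v).
\end{equation*}
Multiplying \eqref{eq1} by the factor $N/c^{-1}(N)$ converts the sum over $n\in\mathbb{N}$ into a Riemann sum with step $1/c^{-1}(N)$ for $\int_0^\infty q^+_t(u,v)\,dt$, yielding the claim once the bulk $n\asymp c^{-1}(N)$ is shown to dominate.

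The only delicate point, which I expect to be the main obstacle, is producing an integrable dominant that legitimises this passage to the limit. For small $n$ (say $n\leq\varepsilon\, c^{-1}(N)$) the plain walk has had no time to traverse a distance of order $N$, so a moderate-deviation bound on $\pr_{x_N}(S_n=y_N)$ controls the contribution. For large $n$ (say $n\geq A\,c^{-1}(N)$) one combines the classical tail estimate $\pr_x(\tau^->n)\sim V(x)\ell(n)n^{-\rho}$ (with $\rho$ the positivity parameter of the stable limit) with the uniform local limit bound $\pr(S_n=y)\leq C/c(n)$ to show the tail is summable and negligible after multiplication by $N/c^{-1}(N)$. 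Once both tails are tamed, uniform convergence of the summand on any bounded window of $t$ lets the Riemann sum converge to the integral, identifying $a_{\alpha,\beta}(u,v)$.
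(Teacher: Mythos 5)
Your proposal follows the same route as the paper: recognize the sum as the Green function of the killed walk, split the sum into a bulk around $n\asymp c^{-1}(N)$ and two tails, apply the Caravenna--Chaumont local limit theorem for conditioned bridges on the bulk, pass to a Riemann sum, and show both tails vanish after the $\varepsilon\to 0$ limit. The identification of $a_{\alpha,\beta}(u,v)$ as the $0$-potential density of the killed stable process and the scaling step $c(n)\sim Nt^{1/\alpha}$ match the paper's computation.

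Two small places where your tail control would need tightening. For small $n$ you invoke a moderate-deviation bound based on the walk ``not having time to traverse a distance of order $N$''; this fails when $u=v$, since then $y_N-x_N$ may be $o(N)$ or even zero. The paper instead uses the Gnedenko bound $\pr_x(S_n=z)\le C/c_n$ uniformly, which gives $\sum_{n<\varepsilon c^{-1}(N)}C/c_n\lesssim\varepsilon^{1-1/\alpha}\,c^{-1}(N)/N$ regardless of $u,v$. For large $n$, you propose combining the tail $\pr_x(\tau^->n)$ with the global local bound $\pr(S_n=y)\le C/c_n$, but multiplying these marginal bounds is not a priori legitimate for the joint event $\{S_n=y,\tau^->n\}$; one either needs a splitting at time $n/2$ via the Markov property or, as the paper does, Doney's Corollary~13, which gives the joint local estimate $\pr_x(S_n=y,\tau^->n)\le CH^+(x)H^-(y)/(nc_n)$ directly and makes $P_3\lesssim\varepsilon^{1/\alpha}\,c^{-1}(N)/N$ immediate.
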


\begin{proof}
We split the sum in \eqref{eq1} into three parts
\begin{multline*}
\sum_{n=1}^\infty\pr_{x_N}(S_n=y_N,\tau^->n) =P_1+P_2+P_3\\
:=
\left( 
\sum_{n<\varepsilon c^{-1}(N)}+\sum_{n\in[\varepsilon c^{-1}(N),c^{-1}(N)/\varepsilon]}+\sum_{n>c^{-1}(N)/\varepsilon}
\right)\pr_{x_N}(S_n=y_N,\tau^->n).
\end{multline*}
In view of the Gnedenko local limit theorem,
$$
\pr_{x}(S_n=z) \le \frac{C}{c_n},\quad \mbox{for any } x,z\in
\mathbb Z \mbox{ and } n\ge 1.
$$
Therefore,
\begin{align}\label{p1}
  P_1\le \sum_{n<\varepsilon c^{-1}(N)}\frac{C}{c_n} \le C_1 \varepsilon^{1-1/\alpha}\frac{c^{-1}(N)}{N}.
\end{align}
Further, applying Corollary~13 from Doney \cite{Doney}, we get 
$$
\pr_{x_N} (S_n=y_N,\tau^->n) \le C \frac{H^+(x_N)H^-(y_N)}{nc_n}\le C(u,v) \frac{c^{-1}(N)}{nc_n}.
$$
This yields 
\begin{equation}\label{p3}
P_3\le C(u,v) c^{-1}(N) \sum_{n>c^{-1}(N)/\varepsilon} \frac{1}{nc_n}
\le C(u,v) \varepsilon^{1/\alpha}\frac{c^{-1}(N)}{N}.
\end{equation}
Further, applying (4.2) from Caravenna and Chaumont \cite{CC13} to every summand in the second
sum, we obtain 
\begin{align*}
  &P_2=(1+o(1)) \sum_{n\in[\varepsilon c^{-1}(N),c^{-1}(N)/\varepsilon]}
  \frac{1}{c_n} g_{\alpha,\beta}\left(\frac{y_N-x_N}{c_n}\right)
  \phi_{\alpha,\beta}\left(\frac{x_N}{c_n},\frac{y_N}{c_n}\right) \\
&=(1+o(1))\frac{c^{-1}(N)}{N} \sum_{n\in[\varepsilon c^{-1}(N),c^{-1}(N)/\varepsilon]}
  \left(\frac{n}{c^{-1}(N)}\right)^{-1/\alpha}
  \psi_{\alpha,\beta}\left(\frac{x_N}{c_n},\frac{y_N}{c_n}\right)\frac{1}{c^{-1}(N)}\\
&=(1+o(1))\frac{c^{-1}(N)}{N} \int_{\varepsilon}^{1/\varepsilon}
x^{-1/\alpha}
  \psi_{\alpha,\beta}\left(\frac{u}{x^{1/\alpha}},\frac{v}{x^{1/\alpha}}\right)dx,
\end{align*}
where 
$$
\psi_{\alpha,\beta}(a,b)=g_{\alpha,\beta}\left(b-a\right)\phi_{\alpha,\beta}(a,b)
$$
and
$$
\phi_{\alpha,\beta}(a,b) = \pr_a\left(\inf_{0\le t\le 1} Y_t\ge 0\mid Y_1=b\right),
$$
where $Y_t$ is a stable process with $Y_1$ defined by \eqref{std}. 

Combining this with \eqref{p1} and \eqref{p3} and letting $\varepsilon\to0$, we obtain
\begin{align*}
\sum_{n=1}^\infty\pr_{x_N}(S_n=y_N,\tau^->n)
\sim \frac{c^{-1}(N)}{N}\int_0^\infty
x^{-1/\alpha}
  \psi_{\alpha,\beta}\left(\frac{u}{x^{1/\alpha}},\frac{v}{x^{1/\alpha}}\right)dx.
\end{align*} 
Thus, the proof is complete.
\end{proof}
\begin{remark}
The limit $a_{\alpha,\beta}$ is the so-called $0$-potential density of a killed stable
process.\hfill$\diamond$
\end{remark}

We next give an explicit expression for $a_{2,0}$.
Using the reflection principle for the Brownian motion, one can easily obtain 
$$
g_{2,0}(a,b)=1-e^{-2ab}.
$$
Therefore,
\begin{align}a_{2,0}(u,v)= 
\frac{1}{\sqrt{2\pi}}\int_0^\infty x^{-1/2} e^{-\frac{(u-v)^2}{2x}}
\left(1-e^{-2uv/x}\right)dx.
\end{align}
Substituting $x=y^{-1}$ we obtain 
$$
\int_0^\infty x^{-1/2} e^{-\frac{(u-v)^2}{2x}}
\left(1-e^{-2uv/x}\right)dz = 
\int_0^\infty \frac{1}{y^{3/2}} \left( e^{-y\frac{(u-v)^2}{2}}-e^{-y\frac{(u+v)^2}{2}}\right)dy.
$$
According to formula 3.434(1) from \cite{GR}
$$
\int_0^\infty \frac{e^{-\nu x}-e^{-\mu x}}{x^{\rho+1}}dx = \frac{\mu^\rho-\nu^\rho}{\rho}\Gamma(1-\rho).
$$
Consequently,
\begin{equation}
\label{a-form}
a_{2,0}(u,v)=\frac{2\Gamma(1/2)}{\sqrt{2\pi}}\left[\frac{u+v}{\sqrt{2}}-\frac{|u-v|}{\sqrt{2}}\right]
=2\min\{u,v\}.
\end{equation}

We now turn to higher moments of local times.

\begin{corollary}\label{corol2}
For every $(u_0,u_1,\ldots,u_m)\in \mathbf R^{m+1}_+$,
\begin{align}
\label{eq2}
\nonumber
&\lim_{N\to\infty}\left(\frac{c^{-1}(N)}{N}\right)^{-m}
 \e_{u_0N} \left[\sum_{j_1<j_2<\ldots<j_m} I\left(S_{j_1}=u_1N,\ldots,S_{j_m}=u_mN,\tau^->j_m\right)\right]\\ 
&\hspace{2cm}=\prod_{i=0}^{m-1} a_{\alpha,\beta}(u_i,u_{i+1})
\end{align}
and
\begin{align}
\label{eq2a}
\nonumber
&\lim_{N\to\infty}\left(\frac{c^{-1}(N)}{N}\right)^{-m}
 \e_{u_0N} \left[\sum_{j_1\leq j_2\leq \ldots\leq j_m} I\left(S_{j_1}=u_1N,\ldots,S_{j_m}=u_mN,\tau^->j_m\right)\right]\\ 
&\hspace{2cm}=\prod_{i=0}^{m-1} a_{\alpha,\beta}(u_i,u_{i+1}).
\end{align}
\end{corollary}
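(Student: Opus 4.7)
The plan is to reduce both statements to $m$ iterated applications of Proposition~\ref{prop1}. For \eqref{eq2}, the strict ordering $j_1<j_2<\cdots<j_m$ allows one to write the event $\{S_{j_1}=u_1N,\ldots,S_{j_m}=u_mN,\tau^->j_m\}$ as the intersection of $m$ events, each measurable with respect to a disjoint block of increments $X_{j_{i-1}+1},\ldots,X_{j_i}$ (with $j_0=0$). Setting $n_i=j_i-j_{i-1}\ge1$, the i.i.d.\ structure of the increments together with translation invariance give the key factorisation
\begin{equation*}
\sum_{j_1<\cdots<j_m}\pr_{u_0N}\bigl(S_{j_1}=u_1N,\ldots,S_{j_m}=u_mN,\tau^->j_m\bigr)
=\prod_{i=0}^{m-1}\sum_{n=1}^{\infty}\pr_{u_iN}\bigl(S_n=u_{i+1}N,\tau^->n\bigr).
\end{equation*}
I would then multiply by $(c^{-1}(N)/N)^{-m}$ and apply Proposition~\ref{prop1} to each of the $m$ factors on the right; since $m$ is fixed and every rescaled factor converges to $a_{\alpha,\beta}(u_i,u_{i+1})$, the product converges to $\prod_{i=0}^{m-1}a_{\alpha,\beta}(u_i,u_{i+1})$, establishing \eqref{eq2}.

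For \eqref{eq2a} I would decompose the weakly ordered sum according to the coincidence pattern $I=\{i:j_i=j_{i+1}\}\subseteq\{1,\ldots,m-1\}$. The term $I=\emptyset$ reproduces \eqref{eq2}. For $I\ne\emptyset$, each $i\in I$ forces $u_i=u_{i+1}$ (otherwise the probability vanishes), and collapsing the coincident indices reduces the problem to an ordered sum over $m-|I|$ distinct times. The factorisation from the first step then bounds this contribution by a constant multiple of $(c^{-1}(N)/N)^{m-|I|}$. Since $c^{-1}(N)/N$ is regularly varying with index $\alpha-1>0$ and hence diverges, every term with $|I|\ge1$ is of lower order than $(c^{-1}(N)/N)^m$, so \eqref{eq2a} follows from \eqref{eq2}.

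I do not anticipate any substantial obstacle: the independence/Markov factorisation is direct, and passing to the limit inside a finite product requires only factor-wise convergence, which is exactly what Proposition~\ref{prop1} supplies. The only mild point is the combinatorial bookkeeping for the weak-inequality case, but this is resolved by the simple observation that ties strictly reduce the number of free summation indices and therefore produce lower-order contributions.
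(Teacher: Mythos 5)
Your proof of \eqref{eq2} is essentially the paper's: the Markov property factorizes the strictly ordered sum into a product of $m$ killed Green-function sums $\sum_{n\geq1}\pr_{u_iN}(S_n=u_{i+1}N,\tau^->n)$, and Proposition~\ref{prop1} gives factor-wise convergence, so the product converges to $\prod_i a_{\alpha,\beta}(u_i,u_{i+1})$. For \eqref{eq2a} the paper merely says the proof is identical; the intended reading is that the same Markov factorization now yields inner sums starting at $n=0$, with the additional $n=0$ term being the indicator $I(u_i=u_{i+1})$, and this $O(1)$ contribution disappears when multiplied by $N/c^{-1}(N)\to0$. Your alternative -- decomposing the weakly ordered sum by the set of tied indices and observing that each tie shortens the chain and so removes one factor of $c^{-1}(N)/N\to\infty$ -- is also correct and reaches the same conclusion; it is a slightly more explicit bookkeeping of the same phenomenon, namely that coincidences contribute at lower order. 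No gap.
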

\begin{proof}
It is immediate from the Markov property that 
\begin{align}
\label{markov}
\nonumber
&\e_{u_0N} \left[\sum_{j_1<j_2<\ldots<j_m} I\left(S_{j_1}=u_1N,\ldots,S_{j_m}=u_mN,\tau^->j_m\right)\right]\\
\nonumber
&=\sum_{j_1=1}^\infty\pr_{u_0N}(S_{j_1}=u_1N,\tau^->j_1)\sum_{j_2=j_1+1}^\infty 
\pr_{u_1N}(S_{j_2-j_1}=u_2N,\tau^->j_2-j_1)\ldots\\
&=\prod_{i=0}^{m-1} \sum_{i=1}^\infty \pr_{u_i N}(S_j=u_{i+1}N,\tau^->j).
\end{align}
Applying Proposition~\ref{prop1}, we get \eqref{eq2}. The proof of \eqref{eq2a} is identical.
\end{proof}
\begin{proposition}\label{prop3}
Let $\mathcal T_m$ be the set of permutations of $\{1,\ldots,m\}$. Then, 
\begin{align}\label{eq4}
\nonumber
&\lim_{N\to\infty}\left(\frac{c^{-1}(N)}{N}\right)^{-m} \e_{u_0N} 
\left[\prod_{i=1}^m L(\tau^-,u_iN)\right]\\ 
&\hspace{2cm}=\sum_{\sigma\in \mathcal T_m}a_{\alpha,\beta}(u_0,u_{\sigma(1)})
\prod_{i=1}^{m-1} a_{\alpha,\beta}(u_{\sigma(i)},u_{\sigma(i+1)}).
\end{align}
\end{proposition}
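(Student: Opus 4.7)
The plan is to expand the product of local times as a multi-sum of indicators and organise the contributions by the combinatorial structure of the index tuple. Writing $L(\tau^-,u_iN)=\sum_{j_i\geq0}I(S_{j_i}=u_iN,j_i<\tau^-)$ and multiplying out,
\begin{equation*}
\prod_{i=1}^m L(\tau^-,u_iN)=\sum_{(j_1,\ldots,j_m)\in\mathbb{Z}_+^m}\prod_{i=1}^m I(S_{j_i}=u_iN,j_i<\tau^-).
\end{equation*}
First I would split this sum into two parts: tuples with pairwise distinct entries, and tuples in which at least two entries coincide. The guess, and what one has to verify, is that only the first part survives the $(c^{-1}(N)/N)^{-m}$ rescaling.

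For the distinct part, every tuple is uniquely characterised by the permutation $\sigma\in\mathcal T_m$ for which $j_{\sigma(1)}<\cdots<j_{\sigma(m)}$. Relabelling $k_i:=j_{\sigma(i)}$, the subsum associated to $\sigma$ equals
\begin{equation*}
\e_{u_0N}\!\left[\sum_{k_1<\cdots<k_m}I(S_{k_1}=u_{\sigma(1)}N,\ldots,S_{k_m}=u_{\sigma(m)}N,\tau^->k_m)\right],
\end{equation*}
which is precisely the quantity handled by \eqref{eq2}, applied to the reordered spatial sequence $(u_{\sigma(1)},\ldots,u_{\sigma(m)})$ started from $u_0$. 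Hence this piece is asymptotic to $(c^{-1}(N)/N)^m\,a_{\alpha,\beta}(u_0,u_{\sigma(1)})\prod_{i=1}^{m-1}a_{\alpha,\beta}(u_{\sigma(i)},u_{\sigma(i+1)})$, and summing over $\sigma\in\mathcal T_m$ produces exactly the right-hand side of \eqref{eq4} after dividing by $(c^{-1}(N)/N)^m$.

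For the coincidence part, each such tuple determines a partition of $\{1,\ldots,m\}$ into some $k<m$ blocks (those indices $i$ with a common $j_i$-value). Applying \eqref{eq2a} to the reduced index vector obtained by picking one representative in each block, the corresponding subsum is bounded by finitely many terms of order $(c^{-1}(N)/N)^k$. Since $c^{-1}$ is regularly varying of index $\alpha>1$, the ratio $c^{-1}(N)/N$ is regularly varying of positive index $\alpha-1$ and so diverges; consequently $(c^{-1}(N)/N)^{k-m}\to0$ whenever $k<m$, and the coincidence contribution disappears after the $(c^{-1}(N)/N)^{-m}$ rescaling. The main obstacle is largely bookkeeping rather than new probability: one must confirm that the relabelling by $\sigma$ in the distinct part matches Corollary \ref{corol2} so that the $a_{\alpha,\beta}$-factors assemble precisely as in \eqref{eq4}, and that only a finite number of partition shapes appear in the coincidence part, so that the uniform $O((c^{-1}(N)/N)^k)$ bound really transfers to the full expectation. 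No probabilistic input beyond Proposition \ref{prop1} and Corollary \ref{corol2} is required.
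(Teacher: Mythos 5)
Your proposal is correct and takes essentially the same route as the paper, which sandwiches $\prod_i L(\tau^-,u_iN)$ between $\sum_{\sigma}\sum_{j_1<\cdots<j_m}$ and $\sum_{\sigma}\sum_{j_1\le\cdots\le j_m}$ and squeezes using \eqref{eq2} and \eqref{eq2a}. Your split into a distinct-index part plus a coincidence part is the same argument organized slightly differently; the vanishing of the coincidence contribution under the $(c^{-1}(N)/N)^{-m}$ rescaling is precisely what the agreement of the limits in \eqref{eq2} and \eqref{eq2a} already encodes.
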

\begin{remark}
The right hand side of \eqref{eq4} is a specialisation of Kac's moment formula 
(see \cite{DK57}) for local times of a killed stable process.
\end{remark}

\begin{proof}
It is clear that 
\begin{align*}
 \prod_{i=1}^m L(\tau^-,u_iN) &=
\sum_{j_1,\ldots,j_m=1}^\infty
I(S_{j_1}=u_1N,\ldots,S_{j_m}=u_mN,\tau^->\max_{k\le m} j_k)\\
&\le 
\sum_{\sigma\in \mathcal T_m} 
\sum_{j_1\le j_2\le \ldots\le j_m} 
I(S_{j_1}=u_{\sigma(1)}N,\ldots,S_{j_m}=u_{\sigma(m)}N,\tau^->j_m). 
\end{align*}
Similarly,
$$
\prod_{i=1}^m L(\tau^-,u_iN) \ge 
\sum_{\sigma\in \mathcal T_m} 
\sum_{j_1< j_2\le \ldots< j_m} 
I(S_{j_1}=u_{\sigma(1)}N,\ldots,S_{j_m}=u_{\sigma(m)}N,\tau^->j_m). 
$$
Taking expectations and applying \eqref{eq2} and \eqref{eq2a}
we get the desired result. 
\end{proof}

It remains to consider the case when the random walk starts at a fixed point $x$.
Replacing $u_0N$ by $x$ in the first equality of \eqref{markov}, we get
\begin{align}
\label{markov2}
\nonumber
&\e_{x} \left[\sum_{j_1<j_2<\ldots<j_m} I\left(S_{j_1}=u_1N,\ldots,S_{j_m}=u_mN,\tau^->j_m\right)\right]\\
\nonumber
&=\sum_{j_1=1}^\infty\pr_{x}(S_{j_1}=u_1N,\tau^->j_1)\sum_{j_2=j_1+1}^\infty 
\pr_{u_1N}(S_{j_2-j_1}=u_2N,\tau^->j_2-j_1)\ldots\\
&=\sum_{j_1=1}^\infty\pr_{x}(S_{j_1}=u_1N,\tau^->j_1)
\prod_{i=1}^{m-1} \sum_{i=1}^\infty \pr_{u_i N}(S_j=u_{i+1}N,\tau^->j).
\end{align}
Thus, additionally to Proposition \ref{prop1}, we have to determine the 
asymptotic behaviour of $\sum_{j_1=1}^\infty\pr_{x}(S_{j_1}=u_1N,\tau^->j_1)$.
First, by the duality lemma for random walks, 
$$
\pr_0(S_j=l,\tau^->j) =\pr(S_j=l, j \mbox{ is a strict ascending ladder epoch}).
$$
Consequently, for each $l\geq1$,
\begin{equation}
\label{zero}
 \sum_{j=1}^\infty \pr_0(S_j=l,\tau^->j) =\sum_{k=1}^\infty \pr(\chi^+_1+\cdots+\chi^+_k=l)=h^+(l).
\end{equation}
Second, for every positive $x$ we split $\tau^-$ into descending ladder epochs.
Then, using the Markov property and \eqref{zero}, we get
$$
\sum_{j=1}^\infty \pr_x(S_j=l,\tau^->j)=U(x,N).
$$
Combining this with \eqref{markov2} and Corollary \eqref{prop1}, one can easily obtain
\begin{proposition}
\label{prop4}
As $N\to\infty$,
\begin{align}\label{eq4a}
\e_{0} \left[\prod_{i=1}^m L(\tau^-,u_iN)\right]\sim
\left(\frac{c^{-1}(N)}{N}\right)^{m-1} 
\sum_{\sigma\in \mathcal T_m}h^+(u_{\sigma(1)}N)
\prod_{i=1}^{m-1} a_{\alpha,\beta}(u_{\sigma(i)},u_{\sigma(i+1)}).
\end{align}
\end{proposition}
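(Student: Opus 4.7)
The plan is to follow the sandwich argument of Proposition \ref{prop3} essentially verbatim, changing only the treatment of the initial time-gap from $0$ up to $j_1$: in place of Proposition \ref{prop1} we use the exact renewal identity \eqref{zero} identifying $\sum_{j\ge 1} \pr_0(S_j=l,\tau^->j)=h^+(l)$. The right-hand side of \eqref{eq4a} carries only an $(m-1)$st power of $c^{-1}(N)/N$ precisely because one of the $m$ Markovian factors is now replaced by this deterministic renewal factor.

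Explicitly, expand
$$
\prod_{i=1}^m L(\tau^-,u_iN)
= \sum_{(j_1,\ldots,j_m)\in\mathbb{Z}_+^m} I\bigl(S_{j_i}=u_iN,\ \tau^->\max_k j_k\bigr),
$$
and sandwich this expression, by permuting the target values, between
$$
\sum_{\sigma\in\mathcal T_m}\sum_{j_1<\cdots<j_m} I\bigl(S_{j_i}=u_{\sigma(i)}N,\tau^->j_m\bigr)
$$
from below and the analogous sum with $j_1\le\cdots\le j_m$ from above, exactly as in Proposition \ref{prop3}. Taking $\pr_0$-expectations and invoking the Markov decomposition \eqref{markov2} with $x=0$ turns each ordered expectation into the product
$$
\Bigl(\sum_{j\ge 1}\pr_0(S_j=u_{\sigma(1)}N,\tau^->j)\Bigr)\prod_{i=1}^{m-1}\Bigl(\sum_{j\ge 1}\pr_{u_{\sigma(i)}N}(S_j=u_{\sigma(i+1)}N,\tau^->j)\Bigr).
$$
By \eqref{zero} the first factor equals $h^+(u_{\sigma(1)}N)$ exactly, and Proposition \ref{prop1} yields the asymptotic $(c^{-1}(N)/N)\,a_{\alpha,\beta}(u_{\sigma(i)},u_{\sigma(i+1)})$ for each of the remaining $m-1$ factors. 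Summing over $\sigma\in\mathcal T_m$ reproduces the right-hand side of \eqref{eq4a}.

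The one point requiring care is to verify that the strict ($<$) and non-strict ($\le$) sandwich bounds agree at leading order, i.e.\ that the diagonal contributions are negligible. A coincidence $j_i=j_{i+1}$ collapses one time-gap and thus removes one application of Proposition \ref{prop1}, producing a term of order $h^+(N)(c^{-1}(N)/N)^{m-2}$ at worst; since $c^{-1}(N)/N\to\infty$ under the standing assumption $\alpha\in(1,2]$ and $h^+$ is bounded, this is strictly smaller than the leading $h^+(u_{\sigma(1)}N)(c^{-1}(N)/N)^{m-1}$, and the squeeze closes. I expect this diagonal bookkeeping to be the only (mild) obstacle; everything else is reassembly of ingredients already proved.
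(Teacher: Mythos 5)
Your proof is correct and follows the same route as the paper: the Markov decomposition \eqref{markov2} with $x=0$, the renewal identity \eqref{zero} to produce the exact factor $h^+(u_{\sigma(1)}N)$ in place of one application of Proposition~\ref{prop1}, Proposition~\ref{prop1} for the remaining $m-1$ factors, and the permutation sandwich from the proof of Proposition~\ref{prop3}. The diagonal bookkeeping you supply (coincidences $j_i=j_{i+1}$ drop one factor of $c^{-1}(N)/N$, which tends to infinity because $\alpha>1$, so these terms are of lower order) is exactly the detail the paper leaves implicit in its one-line derivation of Proposition~\ref{prop4} and in the remark that ``the proof of \eqref{eq2a} is identical,'' so it is a worthwhile addition rather than a deviation; note only that in your order-of-magnitude estimate the surviving renewal factor is $h^+(u_{\sigma(1)}N)$ rather than $h^+(N)$, which makes the cancellation in the ratio exact and renders the boundedness of $h^+$ unnecessary.
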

\section{Proof of Theorem \ref{T1}.}
Recall that the distribution of $L(\tau^-,N)$ conditioned on $L(\tau^-,N)>0$
does not depend on the starting point. Using Proposition \ref{prop3} with
$u_0=u_1=\ldots=u_m=1$, we conclude that
$$
\lim_{N\to\infty}\left(\frac{c^{-1}(N)}{N}\right)^{-m}\e_N \left[L^m(\tau^-,N)\right]
=m!\left(a_{\alpha,\beta}(1,1)\right)^m.
$$
Thus, by the method of moments,
$$
\lim_{N\to\infty}\pr_N\left(\frac{N}{c^{-1}(N)}L(\tau^-,N)>x\right)=e^{-xc_{\alpha,\beta}},
\quad x>0.
$$
Since $\pr_x(L(\tau^-,N)\geq k|L(\tau^-,N)>0)=\pr_N(L(\tau^-,N)\geq k)$ for all $k\geq1$ and
all $x$, we get \eqref{T1.2}.

As we have shown in the previous section,
$$
\e_x L(\tau^-,N)=U(x,N).
$$
Furthermore, by the Markov property,
$$
\e_x L(\tau^-,N)=\pr_x(L(\tau^-,N)>0)\e_N L(\tau^-,N).
$$
Consequently,
$$
\pr_x(L(\tau^-,N)>0)=\frac{U(x,N)}{\e_N L(\tau^-,N)}.
$$
Applying Proposition \ref{prop3} with $m=1$ and $u_0=u_1=1$ we obtain \eqref{T1.1} with
$c_{\alpha,\beta}:=1/a_{\alpha,\beta}(1,1)$.
Thus, the proof of Theorem \ref{T1} is finished.

If $\sigma^2:=\e X^2$ is finite then $c(n)=\sigma\sqrt{n}$ and $c^{-1}(N)=N^2/\sigma^2$.
Furthermore, according to  \eqref{a-form}, $c_{2,0}=1/2$. Finally, recalling that
$\e\chi^+<\infty$ for random walks with finite variance and applying the strong renewal
theorem, we get $h^+(N)\sim\frac{1}{\e\chi^+}$ for random walks with finite variance.
As a result we have the relations from Corollary \ref{Cor0}.
\section{Proof of Theorem 2}
We start with the Laplace transform of the vector
$$
\left(\frac{N}{c^{-1}(N)}L(\tau^-,u_1N),\frac{N}{c^{-1}(N)}L(\tau^-,u_2N),\ldots,
\frac{N}{c^{-1}(N)}L(\tau^-,u_mN)\right).
$$
Obviously, for every $r\geq1$,
$$
\sum_{j=0}^{2r+1}(-1)^j\frac{x^j}{j!}\leq e^{-x}\leq\sum_{j=0}^{2r}(-1)^j\frac{x^j}{j!},\ x\geq0.
$$
From this we infer that
\begin{align}
\label{up.bound}
\nonumber
&\e_0\left[\exp\left\{-\sum_{i=1}^m \lambda_i\frac{N}{c^{-1}(N)}L(\tau^-,u_iN)\right\}\right]\\
&\hspace{2cm}\leq\sum_{j=0}^{2r}\frac{(-1)^j}{j!}\left(\frac{N}{c^{-1}(N)}\right)^j
\e_0\left(\sum_{i=1}^m\lambda_iL(\tau^-,u_iN)\right)^j
\end{align}
and
\begin{align}
\label{low.bound}
\nonumber
&\e_0\left[\exp\left\{-\sum_{i=1}^m \lambda_i\frac{N}{c^{-1}(N)}L(\tau^-,u_iN)\right\}\right]\\
&\hspace{2cm}\geq\sum_{j=0}^{2r+1}\frac{(-1)^j}{j!}\left(\frac{N}{c^{-1}(N)}\right)^j
\e_0\left(\sum_{i=1}^m\lambda_iL(\tau^-,u_iN)\right)^j.
\end{align}
Let $\{\mu=(\mu_1,\mu_2,\ldots,\mu_m)\}$ be the set of $m$-dimensional multiindices.
Then, by the binomial formula,
\begin{equation}
\label{mixed.moments}
\e_0\left(\sum_{i=1}^m\lambda_iL(\tau^-,u_iN)\right)^j=
\sum_{\mu:|\mu|=j}\frac{j!}{\mu!}\lambda^\mu\e_0\left[\prod_{i=1}^mL^{\mu_i}(\tau^-,u_iN)\right]
\end{equation}
Recall that $h^+$ is assumed to be regularly varying. Thus, it follows from
Proposition~\ref{prop4} that there exists $\psi_j(u,\mu)$ such that
\begin{equation}
\label{mixed.moments2}
\e_0\left[\prod_{i=1}^mL^{\mu_i}(\tau^-,u_iN)\right]
\sim h^+(N)\left(\frac{c^{-1}(N)}{N}\right)^{j-1}\phi_j(u,\mu).
\end{equation}
Furthermore, this proposition gives also the following bound for $\phi_j$
\begin{equation}
\label{psi-bound}
\phi_j(u,\mu)\leq j! \left(\min u_j\right)^{\alpha\rho-1}
\left(\max_{k,l} a_{\alpha,\beta}(u_k,u_l)\right)^{j-1}
\end{equation}

Combining \eqref{mixed.moments} and \eqref{mixed.moments2}, we obtain
\begin{equation}
\label{mom-asymp}
\e_0\left(\sum_{i=1}^m\lambda_iL(\tau^-,u_iN)\right)^j
\sim h^+(N)\left(\frac{c^{-1}(N)}{N}\right)^{j-1} \psi_j(u,\lambda)
\end{equation}
with some function $\psi_j$ satisfying
\begin{equation}
\label{phi-bound}
\psi_j(u,\lambda)\leq j!\left(\min u_j\right)^{\alpha\rho-1}
\left(\max_{k,l} a_{\alpha,\beta}(u_k,u_l)\right)^{j-1}
\left(\sum_{k=1}^m \lambda_k \right)^j.
\end{equation}
This estimate is immediate from \eqref{mixed.moments} and \eqref{psi-bound}.

Plugging \eqref{mom-asymp} into \eqref{up.bound} and \eqref{low.bound}, we obtain
\begin{align*}
&\limsup_{N\to\infty}\frac{c^{-1}(N)}{nh^+(N)}
\left(\e_0\left[\exp\left\{-\sum_{i=1}^m \lambda_i\frac{N}{c^-(N)}L(\tau^-,u_iN)\right\}\right]-1\right)\\
&\hspace{3cm}\leq \sum_{l=1}^{2r}\frac{(-1)^j}{j!}\psi_j(u,\lambda).
\end{align*}
and
\begin{align*}
&\liminf_{N\to\infty}\frac{c^{-1}(N)}{nh^+(N)}
\left(\e_0\left[\exp\left\{-\sum_{i=1}^m \lambda_i\frac{N}{c^{-1}(N)}L(\tau^-,u_iN)\right\}\right]-1\right)\\
&\hspace{3cm}\geq \sum_{l=1}^{2r+1}\frac{(-1)^j}{j!}\psi_j(u,\lambda)
\end{align*}
respectively. Note that estimate \eqref{phi-bound} allows to let $r\to\infty$ for 
all $\lambda_k$ small enough. As a result, there exists $\delta>0$ such that 
if $\lambda_k\in[0,\delta)$ for all $k$ then
\begin{equation}
\label{asymp-final}
\lim_{N\to\infty}\frac{c^{-1}(N)}{nh^+(N)}
\left(\e_0\left[\exp\left\{-\sum_{i=1}^m \lambda_i\frac{N}{c^-(N)}L(\tau^-,u_iN)\right\}\right]-1\right)
=\Psi(u,\lambda),
\end{equation}
where
$$
\Psi(u,\lambda):=\sum_{j=1}^{\infty}\frac{(-1)^j}{j!}\psi_j(u,\lambda).
$$
Notice also that \eqref{mixed.moments} implies the continuity of $\lambda\mapsto\Psi(u,\lambda)$
on $[0,\delta)^m$.

It is clear that $(L_W(T_M,x_1),L_W(T_M,x_2),\ldots,L_W(T_M,x_m))$ is equal in distribution
to the sum of $M$ independent copies of $(L(\tau^-,x_1),L(\tau^-,x_2),\ldots,L(\tau^-,x_m))$.
Then, in view of \eqref{asymp-final},
$$
\lim_{N\to\infty}\e_0\left[\exp\left\{-\sum_{i=1}^m\lambda_il^{(N)}(u_i)\right\}\right]=e^{-\Psi_(u,\lambda)}
$$
for any sequence $M(N)$ whichis asymptotically equivalent to $nh^+(N)/c^-(N)$.

By the continuity theorem for the Laplace transformation, the distribution of the vector
$(l^{(N)}(u_1),l^{(N)}(u_2),\ldots,l^{(N)}(u_m))$ converges weakly to a law $F_{u}$,
which is characterised by the Laplace transform $\lambda\mapsto e^{-\Psi(u,\lambda)}$.
The continuity of functions $\Psi(u,\lambda)$ implies, in particular, the consistency
of the family of finite dimensional distributions $\{F_u\}$. Thus, the proof is completed.

\end{document}